\def\th@exercise{%
  \normalfont 
  \thm@headpunct{:}%
}
\theoremstyle{plain}
\newtheorem{thm}{Theorem}[section]
\newtheorem{lemm}[thm]{Lemma}
\pgfplotsset{compat=1.15}
\theoremstyle{remark}
\theoremstyle{plain}
\newtheoremstyle{note}
  {3pt}
  {3pt}
  {}
  {}
  {\itshape}
  {:}
  {.5em}
  {}
\newtheoremstyle{citing}
  {3pt}
  {3pt}
  {\itshape}
  {}
  {\bfseries}
  {.}
  {.5em}
  {\thmnote{#3}}
\theoremstyle{citing}
\newtheoremstyle{break}
  {9pt}
  {9pt}
  {\itshape}
  {}
  {\bfseries}
  {.}
  {\newline}
  {}
\let\lvert=|\let\rvert=|
\title{Conjugacy classes of completely reducible cube-free solvable $p'$-subgroups of $\mbox{GL}(2, q)$}
\author{Prashun Kumar \footnote{Dr. B. R. Ambedkar University Delhi, Delhi 110006, India; \ E-mails: prashunkumar.19@stu.aud.ac.in,  prashun07kumar@gmail.com.} 
\ and \ Geetha Venkataraman\footnote{Corresponding Author, Dr. B. R. Ambedkar University Delhi, Delhi 110006, India; E-mails: geetha@aud.ac.in,  geevenkat@gmail.com.}}
\begin{document}
\fontfamily{cmr}\selectfont

\maketitle


\bigskip
\noindent
{\small{\bf ABSTRACT:}}
 Let $m$ be a cube-free positive integer and let $p$ be a prime such that $p \nmid m$. In this paper we find the number of conjugacy classes of completely reducible solvable cube-free subgroups in ${\rm GL}(2,q)$ of order $m$, where $q$ is a power of $p$.

\medskip
\noindent
{\small{\bf Keywords}{:} }
general linear group, conjugacy class, reducible subgroup, irreducible subgroup, primitive subgroup, imprimitive subgroup.

\medskip
\noindent
{\small{\bf Mathematics Subject Classification-MSC2020}{:} }
20E34, 20E45, 20F16, 20H30

\vspace{.25in}
\noindent
{\bf THIS IS AN EARLY VERSION OF THE PAPER. FOR THE FINAL VERSION SEE \url{https://doi.org/10.1142/S0219498825502597}.}
\vspace{.25in}

\baselineskip=\normalbaselineskip
\section{Introduction}
A closed formula for the number of conjugacy classes of the reducible subgroups of ${\rm GL}(2,t)$  of orders $p,p^2,pr$ where $p,r$ and $t$ are distinct primes has been given in \cite{DEP2022}. Let $p$ be a prime and let $q$ be a power of $p$. Motivated by the aforementioned result we found a formula for the number of conjugacy classes of reducible cyclic subgroups of ${\rm GL}(2,q)$, see \cite{PKGV2023}.

Chapters $3$ and $4$ of \cite{S1992} give a complete and irredundant list of conjugacy class representatives of soluble irreducible subgroups of ${\rm GL}(2, p^k)$ where $p$ is prime. Subgroups of ${\rm GL}(2,q)$ in general, are also discussed in some detail in \cite{B1967} and \cite{FB2005}.

A group is said to be cube-free if its order is not divisible by the cube of any prime.  The structure of a solvable cube-free $p'$-subgroup of ${\rm GL}(2,q)$ is discussed in \cite{DE2005} and \cite{QL2011}.  The objective of this paper is to use this structure to find the number of conjugacy classes of solvable cube-free $p'$-subgroups of ${\rm GL}(2,q)$ of order $m$ where $p \nmid m$.

\medskip
\noindent
 Throughout the paper, $p$ is a prime, $q$ is a power of $p$ and $\mathbb{F}_q$ is the finite field of order $q$. Let $D(2,q)$, denote the subgroup of diagonal matrices of $\mbox{GL}(2,q)$. Any $d \in D(2,q)$ with diagonal entries $d_1$ and $d_2$ will be represented as $dia(d_1,d_2)$. Let $M(2,q) = D(2,q) \rtimes \langle a \rangle$ be the subgroup of monomial matrices in ${\rm GL}(2,q)$, where $a =$ $\begin{pmatrix}
    0 & 1\\
    1 & 0
\end{pmatrix}$. By $D(2,q)a$ we mean the right coset of $D(2,q)$ with respect to $a$. Let $N(2,q)$ be  the normaliser of $S(2,q)$, where $S(2, q) \cong \mathbb{Z}_{q^2-1}$ is a Singer cycle.

\bigskip
\noindent
Let $H$ be a solvable cube-free $p'$-subgroup of ${\rm GL}(2,q)$. Lemma \ref{strctre_of_cbe_fre_sbgrps_of_GL} below describes the structure of such an $H$. While most of this is known, we nevertheless provide a sketch proof. The main results of this paper will be stated using the structure described in Lemma \ref{strctre_of_cbe_fre_sbgrps_of_GL}.

\begin{lemm}{\label{strctre_of_cbe_fre_sbgrps_of_GL}}
Let $K \leq {\rm GL}(2,q)$ be a solvable cube-free $p'$-subgroup. Then one of the following holds. 
\begin{enumerate}[\rm{(}a\rm{)}]
    \item If $K$ is reducible, then $K$ is conjugated to a subgroup of $D(2,q)$ and $K \cong {\mathbb Z}_l \times {\mathbb Z}_s$ where $l \mid q-1$ and $s \mid q-1$.
    \item If $K$ is imprimitive, then $K$ is conjugated to a subgroup of $M(2,q)$ and $K \cong L \rtimes P $ where $L \leq D(2,q)$ and $P$ is a cyclic subgroup of order $2^{\beta}$ where $\beta \in \{1,2\}$. 
    \item If $K$ is primitive, then $K$ is conjugated to a subgroup of $N(2,q)$ and $K$ is either cyclic or $K = L \rtimes P$ where $L \leq S(2,q)$ and $P$ is a Sylow $2$-subgroup of $K$.
\end{enumerate}
\end{lemm}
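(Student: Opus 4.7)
The plan is to treat the three cases separately: in each, first conjugate $K$ into one of the distinguished subgroups $D(2,q)$, $M(2,q)$, or $N(2,q)$ by an appropriate change-of-basis argument, and then extract the semidirect-product structure by Sylow/Hall decompositions calibrated to the cube-free hypothesis.

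For part (a), since $p \nmid |K|$, Maschke's theorem applies: a $K$-invariant $1$-dimensional subspace admits a $K$-invariant complement, so $\mathbb{F}_q^2$ splits as a direct sum of two $K$-invariant lines. Choosing a basis along these lines simultaneously diagonalizes $K$, placing it inside $D(2,q) \cong \mathbb{Z}_{q-1} \times \mathbb{Z}_{q-1}$. Any subgroup of $\mathbb{Z}_{q-1} \times \mathbb{Z}_{q-1}$ is a product of at most two cyclic groups whose orders divide $q-1$, giving $K \cong \mathbb{Z}_l \times \mathbb{Z}_s$ with $l, s \mid q-1$.

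For part (b), the imprimitivity hypothesis produces a system of two $1$-dimensional subspaces permuted by $K$; adapting the basis to this system conjugates $K$ into $M(2,q) = D(2,q) \rtimes \langle a \rangle$, and $L_0 := K \cap D(2,q)$ is normal of index $2$ in $K$. The cube-free condition forces $|K|_2 \in \{2,4\}$, so I would split into two subcases according to whether $K$ contains an involution outside $D(2,q)$. If it does, taking $L = L_0$ and $P = \langle \sigma \rangle$ for such an involution gives a decomposition with $\beta = 1$. If it does not, the identity $(\mathrm{diag}(d_1,d_2)\,a)^2 = d_1 d_2\,I$ together with the cube-free bound on $|K|_2$ shows that the Sylow $2$-subgroup of $K$ is cyclic of order $4$; then taking $L$ to be the Hall $2'$-subgroup of $K$ (which is normal as the characteristic odd part of the abelian $L_0$) and $P$ a Sylow $2$-subgroup, Schur--Zassenhaus yields $K = L \rtimes P$ with $\beta = 2$.

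For part (c), I would appeal to the classical structure theorem for primitive solvable subgroups of $\mathrm{GL}(2,q)$ (see Suzuki \cite{S1992}) to conjugate $K$ into $N(2,q)$. Since $[N(2,q) : S(2,q)] = 2$, either $K \subseteq S(2,q)$, whence $K$ is cyclic, or $K \cap S(2,q)$ has index $2$ in $K$. In the latter case, take $L$ to be the Hall $2'$-subgroup and $P$ a Sylow $2$-subgroup of $K$; Schur--Zassenhaus gives $K = L \rtimes P$, and $L \leq S(2,q)$ because the quotient $N(2,q)/S(2,q) \cong \mathbb{Z}_2$ forces every odd-order element of $N(2,q)$ into $S(2,q)$. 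The principal technical obstacle is part (b): the requirement that $P$ be \emph{cyclic} prevents a one-line appeal to Schur--Zassenhaus when $K$ has no involution outside $D(2,q)$, and one must use the cube-free hypothesis to rule out a Klein-four Sylow $2$-subgroup and force the Sylow $2$-subgroup to be cyclic of order $4$ in this remaining subcase.
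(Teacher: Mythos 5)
Parts (a) and (b) of your proposal are correct and follow essentially the same route as the paper; in fact your treatment of (b) is more careful than the paper's own sketch (the paper simply writes $K = L_1 \rtimes P_1$ with $L_1$ the Hall $2'$-subgroup of $K \cap D(2,q)$ and $P_1$ a Sylow $2$-subgroup, and asserts that ``using this we can write $K$ in the required form''; your case split on whether $K$ contains an involution in the coset $D(2,q)a$, and the computation $(\mathrm{dia}(d_1,d_2)a)^2 = d_1d_2 I$ forcing a cyclic Sylow $2$-subgroup of order $4$ in the remaining case, is exactly the missing bookkeeping). Note that in your first subcase $L = K \cap D(2,q)$ need not be the Hall $2'$-subgroup, which is why the lemma only asserts $L \leq D(2,q)$; you have this right.

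The genuine gap is in part (c), at the step you dismiss as a citation: there is no classical theorem stating that every primitive solvable subgroup of ${\rm GL}(2,q)$ is conjugate into $N(2,q)$, and the statement is in fact false without the cube-free hypothesis. The classification of primitive solvable subgroups of ${\rm GL}(2,q)$ has a second family, namely subgroups whose Fitting subgroup contains a quaternion group of order $8$ (the groups related to ${\rm SL}(2,3)$ and ${\rm GL}(2,3)$, lying in the normaliser of an extraspecial-type $2$-group rather than of a Singer cycle). Ruling this family out is precisely where the hypotheses do their work, and it is the content of the paper's argument: since $K$ has cube-free order, $F = F(K)$ is nilpotent with Sylow subgroups of order at most $p_i^2$, hence abelian; by Clifford's theorem and primitivity $F$ is either scalar or irreducible; solvability gives $C_K(F) \leq F$, which rules out the scalar case; an abelian irreducible subgroup is cyclic, hence conjugate into $S(2,q)$; and then $K \leq N_{{\rm GL}(2,q)}(F)$ is conjugate into $N(2,q)$ by \cite[Theorem 2.3.5]{S1992}. (The abelian case is handled separately via \cite[Theorems 2.3.2, 2.3.3]{S1992}.) Your proposal supplies none of this, and your final paragraph explicitly identifies (b) rather than (c) as the only technical obstacle, so this is a missing idea rather than a compressed one. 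The second half of your part (c), extracting $L \rtimes P$ once $K \leq N(2,q)$ is known, is fine. (Minor: the reference \cite{S1992} is Short, not Suzuki.)
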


\begin{proof}
    If $K$ is a reducible $p'$-subgroup of ${\rm GL}(2,q)$, then the underlying ${\mathbb F}_q K$-module $V$ is a direct sum of two one-dimensional submodules of $K$. So we can find a basis of $V$ with respect to which elements of $K$ are diagonal. Thus $K$ conjugates to a subgroup of $D(2,q)$ and is as given in part (a).
    
    \vspace{.05in}
    Now let $K$ be an imprimitive $p'$-subgroup. Then the underlying ${\mathbb F}_q K$-module $V$ is a direct sum of two one-dimensional subspaces $V_1 = \langle v_1 \rangle$ and $V_2= \langle v_2 \rangle$ such that $K$ permutes the $V_i$. If we choose the basis $\{v_1, v_2\}$ for $V$, then with respect to this basis, the elements of $K$ are either diagonal or are elements of the coset $D(2,q)a$. Hence $K$ conjugates to a subgroup of $M(2,q)$. Now assume $K \leq M(2,q)$. Then $\hat{K} = K \cap D(2,q)$ is a proper normal subgroup of $K$. Let $L_1$ be the Hall $2'$-subgroup of $\hat{K}$. Then $K = L_1 \rtimes P_1$, where $P_1$ is a Sylow $2$-subgroup of $K$ and using this we can write $K$ in the required form.  

\vspace{.05in}
Now let $K$ be a primitive solvable cube-free $p'$-subgroup of ${\rm GL}(2,q)$. If $K$ is abelian, then $K$ is cyclic and by \cite[Theorem 2.3.2]{S1992} and \cite[Theorem 2.3.3]{S1992}, $K$ is conjugated to a subgroup of $N(2,q)$. Suppose $K$ is non-abelian. Let $F = F(K)$ be the Fitting subgroup of $K$. Since $K$ is of cube-free order, $F$ is abelian. By Clifford's Theorem, we get that $F$ is either irreducible or $F$ has only scalar matrices. Since $K$ is solvable we have $C_K(F) \leq F$. Thus $F$ cannot have only scalar matrices and must be irreducible. Since $F$  is abelian, it has to be cyclic. Therefore as seen earlier, $F$ is conjugated to a subgroup of $S(2,q)$. Since $F \trianglelefteq K$, by \cite[Theorem 2.3.5]{S1992}, we have that $K$ is conjugated to a subgroup of $N(2,q)$.  Since $N(2,q) = S(2, q) \rtimes \langle b \rangle$ where $b$ has order $2$, as in the above case, we can show that $K$ has the form as in part (c) if $K$ has an element in common with the coset $S(2,q)b$.

\end{proof}

 Now we shall state the main results of this paper using the results of Lemma \ref{strctre_of_cbe_fre_sbgrps_of_GL}.

\begin{thm}{\label{cnjugcy_clss_of_rducble_sbgrps}}
    Let $H$ be a subgroup of $D(2, q)$ of cube-free order $m$ where $p\nmid m$. Let $m= p_0^{\beta_0}p_1^{\beta_1} \ldots p_k^{\beta_k}$ be the prime decomposition for $m$ where $p_0=2$. Further let $\beta_i$ be integers with  $\beta_i \geq 0$ for all $i$ and at least one $\beta_i > 0$. If $\beta_i >0$, then let $P_i$ denote a Sylow $p_i$-subgroup of $H$. Let ${\cal I} =\{i >0 \mid P_i \mbox{ is cyclic} \}$ and let $|{\cal I}| =r$.

    \vspace{.05in}
    \noindent Let $N_{red}(m,H)$ be the number of conjugacy classes of reducible subgroups of ${\rm GL}(2,q)$ of order $m$ that are isomorphic to $H$.  Then 
  $$N_{red}(m,H) = \frac{1}{2}(\rho(m,H) + \delta(m,H))$$
  where $\rho(m,H) =
  \begin{cases}

  \displaystyle\prod_{i \in {\cal I} \cup\{0\}}(p_i^{\beta_i} + p_i^{\beta_i -1}) & \mbox{if } r \geq 0, \, m \geq 2 \mbox{ is even and } P_0 \mbox{ is cyclic}, \smallskip \\
  \ \ \, \displaystyle\prod_{i \in {\cal I}}(p_i^{\beta_i} + p_i^{\beta_i -1}) & \mbox{if } r > 0, \, m >2 \mbox{ is odd or } P_0 \cong {\mathbb Z}_2 \times {\mathbb Z}_2, \smallskip \\
  \ \ \ 1 & \mbox{if } r = 0, \, \beta_0=0  \mbox{ or } P_0 \cong {\mathbb Z}_2 \times {\mathbb Z}_2,
  \end{cases}$
  
  \vspace{.25in}
  \noindent
  \ \ and 
    $\delta(m,H) =
\begin{cases}
 2^r  & \mbox{if } r \geq 0, \, 0 \leq \beta_0 \leq 1 \mbox{ or } P_0 \cong {\mathbb Z}_2 \times {\mathbb Z}_2, \smallskip \\
  2^{r+1} & \mbox{if } r \geq 0, \, \beta_0 = 2 \mbox{ and } P_0 \cong {\mathbb Z}_4.
\end{cases}
$\\

\end{thm}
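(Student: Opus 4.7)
The plan is to reduce the count to a $\mathbb{Z}_2$-orbit enumeration and attack it via Cauchy--Frobenius applied prime-by-prime.

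First, by Lemma~\ref{strctre_of_cbe_fre_sbgrps_of_GL}(a), every reducible cube-free $p'$-subgroup $K\le{\rm GL}(2,q)$ is ${\rm GL}(2,q)$-conjugate into $D(2,q)$, so I may replace the original count by the number of ${\rm GL}(2,q)$-orbits on the set $\mathcal{F}$ of subgroups of $D(2,q)$ of order $m$ isomorphic to $H$. Next I would argue that ${\rm GL}(2,q)$-conjugacy between two such subgroups coincides with $M(2,q)$-conjugacy: if $K\le D(2,q)$ is non-scalar, then $K$ has the unordered pair of coordinate lines $\{\mathbb{F}_q e_1,\mathbb{F}_q e_2\}$ as its canonical eigendecomposition of $\mathbb{F}_q^2$, so any $g\in{\rm GL}(2,q)$ carrying $K$ into another subgroup of $D(2,q)$ must permute these eigenlines and hence lie in $M(2,q)$; scalar subgroups are central and are only conjugate to themselves. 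Since $D(2,q)$ is abelian, only $M(2,q)/D(2,q)=\langle\sigma\rangle\cong\mathbb{Z}_2$ acts nontrivially, and Cauchy--Frobenius gives
\[
N_{red}(m,H)=\tfrac{1}{2}\bigl(|\mathcal{F}|+|\mathcal{F}^\sigma|\bigr),
\]
so the task reduces to verifying $\rho(m,H)=|\mathcal{F}|$ and $\delta(m,H)=|\mathcal{F}^\sigma|$.

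Because $\gcd(m,q)=1$, each $K\in\mathcal{F}$ decomposes as the internal direct product of its Sylow subgroups $K(p_i)\le A(p_i)\times A(p_i)$, where $A=\mathbb{F}_q^*$ and $A(p_i)$ denotes its Sylow $p_i$-subgroup. The swap $\sigma$ respects this decomposition, so both counts factor as
\[
|\mathcal{F}|=\prod_{i=0}^{k} f_i,\qquad |\mathcal{F}^\sigma|=\prod_{i=0}^{k} f_i^\sigma,
\]
where $f_i$ (resp.\ $f_i^\sigma$) is the number of (resp.\ $\sigma$-invariant) subgroups of $A(p_i)\times A(p_i)$ isomorphic to $P_i$.

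The local counts are then evaluated in the four possibilities for $P_i$. The trivial and elementary-abelian cases give $f_i=f_i^\sigma=1$ (in the latter $P_i$ must equal the entire $p_i$-torsion of $A(p_i)\times A(p_i)$, which is itself $\sigma$-invariant). For $P_i=\mathbb{Z}_{p_i}$, counting lines in $\mathbb{F}_{p_i}^2$ yields $f_i=p_i+1$; the $\sigma$-fixed lines are the diagonal and the anti-diagonal, which coincide exactly when $p_i=2$, so $f_i^\sigma=2$ for odd $p_i$ and $f_i^\sigma=1$ for $p_i=2$. For $P_i=\mathbb{Z}_{p_i^2}$, counting generators modulo $\phi(p_i^2)$ yields $f_i=p_i^2+p_i$, while $\sigma$-fixedness of $\langle(x,y)\rangle$ forces $k^2\equiv 1\pmod{p_i^2}$ with $y=kx$, whose only solutions are $k\equiv\pm 1$; these remain distinct even modulo $4$, so $f_i^\sigma=2$ in all cases.

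The main obstacle is the bookkeeping at $p_0=2$: the coincidence $-1\equiv 1\pmod 2$ collapses the diagonal and anti-diagonal in the $P_0\cong\mathbb{Z}_2$ case but not in the $P_0\cong\mathbb{Z}_4$ case, which is precisely why $\delta$ acquires an extra factor of $2$ exactly when $P_0\cong\mathbb{Z}_4$. Matching the three regimes of $\rho$ to those in the statement is then routine according to whether $P_0$ is nontrivial cyclic (in which case its factor $2^{\beta_0}+2^{\beta_0-1}$ is absorbed into the product over $\mathcal{I}\cup\{0\}$) or $P_0$ is trivial or Klein (in which case the $i=0$ factor is $1$ and the product runs over $\mathcal{I}$ alone, possibly empty). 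Multiplying the local contributions produces $\rho(m,H)$ and $\delta(m,H)$ in the stated piecewise form.
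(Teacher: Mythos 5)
Your proposal is correct and follows essentially the same route as the paper: reduce to $M(2,q)$-conjugacy of subgroups of $D(2,q)$, apply Cauchy--Frobenius with only the coset swap acting nontrivially, factor over Sylow subgroups, and compute the total and swap-fixed local counts with the special bookkeeping at $p_0=2$. The only difference is that you re-derive inline (via the eigenline argument and the $k^2\equiv 1$ computation) the two facts the paper imports from \cite{PKGV2023}.
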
 

\begin{thm}\label{Conjgcy_clases_of_imprmtve_sbgrps}
    Let $H\leq M(2,q)$ be a cube-free imprimitive subgroup of order $m$ where $p\nmid m$. Let $N_{imp}(m, H)$ be the number of  conjugacy classes of imprimitive subgroups of ${\rm GL}(2,q)$ of order $m$ that are isomorphic to $H$. Then $N_{imp}(m,H)=1$.
\end{thm}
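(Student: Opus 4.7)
The plan is to show that every imprimitive subgroup $K$ of ${\rm GL}(2,q)$ of order $m$ with $K \cong H$ is conjugate to $H$, by bringing both into a common canonical form. By Lemma \ref{strctre_of_cbe_fre_sbgrps_of_GL}(b) I may assume $K, H \leq M(2,q)$, and write each as $L \rtimes P$ with $L$ the Hall $2'$-subgroup of the diagonal intersection and $P$ a cyclic Sylow $2$-subgroup of order $2^{\beta}$ for some $\beta \in \{1,2\}$. The hypothesis $K \cong H$ pins down a common $\beta$ for both.

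To standardise, I first note that any generator of $P$ has the form $dia(d, \epsilon d^{-1}) a$, with $\epsilon = 1$ when $\beta = 1$ and $\epsilon = -1$ when $\beta = 2$ (the latter because the square is then a scalar of order $2$, namely $-I$; this also forces $q$ odd). Conjugation by $dia(1,d) \in D(2,q)$ replaces such a generator by $a$ or by $dia(1,-1)a$ respectively, and it fixes every diagonal element, so $L$ is unaltered. Carrying this step out for both $K$ and $H$, I reduce to showing that two subgroups of $M(2,q)$ of the form $L \rtimes \langle h_0 \rangle$ with the same $h_0 \in \{a,\, dia(1,-1)a\}$ must have $L_K = L_H$.

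Now $h_0$ acts on $D(2,q)$ by the swap $dia(x,y) \mapsto dia(y,x)$, an involution. Since $|L|$ is odd, coprime-order module theory splits $L$ as $L = L^+ \times L^-$ with $L^+ \leq \{dia(x,x) : x \in \mathbb{F}_q^*\}$ the fixed subgroup and $L^- \leq \{dia(x,x^{-1}) : x \in \mathbb{F}_q^*\}$ the subgroup inverted by $h_0$. Both ambient subgroups are cyclic of order $q-1$, hence each $L^\pm$ is the unique subgroup of its ambient group of the relevant order. The crux is that $|L^+|$ is an isomorphism invariant of $K$: abstractly, $L$ is the (normal, hence unique) Hall $2'$-subgroup of $K$, and $L^+ = C_L(P)$ does not depend on the choice of Sylow $P$, because two Sylow $2$-subgroups differ by an element of the abelian $L$ which centralises $L$. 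Any abstract isomorphism $K \to H$ therefore sends $L^+_K$ to $L^+_H$ and $L^-_K$ to $L^-_H$, matching orders; cyclic uniqueness then yields $L^+_K = L^+_H$ and $L^-_K = L^-_H$, so $L_K = L_H$ and $K = H$ after conjugation.

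The step I expect to require the most care is the intrinsic characterisation of $L^+$ and its transport under an abstract isomorphism $K \to H$; one must check that $C_L(P)$ is genuinely invariant under the choice of Sylow and hence characteristic in $K$, so that it corresponds correctly under the isomorphism. Once this is in place, the matrix manipulations in the standardisation step and the identification of $L^\pm$ inside the two cyclic ambient groups are entirely routine.
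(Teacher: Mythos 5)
Your overall strategy is sound and, where it applies, it is a genuinely different route from the paper's: the paper proves the key step $L_K=L_H$ prime by prime, using the classification of $a$-invariant cyclic subgroups of $D(2,q)$ from its earlier work, whereas you split $L$ into the fixed and inverted parts under the swap and observe that $C_L(P)$ is a characteristic subgroup, which is cleaner. Your standardisation of the generator of a cyclic $P$ to $a$ or $dia(1,-1)a$ is exactly the paper's Lemma \ref{elmnts_of_ordr_4_in_M}, and the transport of $|L^+|$ under an abstract isomorphism is handled correctly.

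However, there is a genuine gap: you assume that a Sylow $2$-subgroup $P$ of $K$ is cyclic, and this is false in general. Lemma \ref{strctre_of_cbe_fre_sbgrps_of_GL}(b) asserts that the \emph{complement} $P$ in the decomposition $K=L\rtimes P$ can be taken cyclic, but there the subgroup $L\leq D(2,q)$ may have even order; it is not the Hall $2'$-subgroup of $K\cap D(2,q)$. Concretely, take $q\equiv 1 \pmod{12}$, $\lambda\in\mathbb{F}_q^*$ of order $3$, and
$K=\bigl(\langle dia(\lambda,\lambda^{-1})\rangle\times\langle -I\rangle\bigr)\rtimes\langle a\rangle$,
a cube-free imprimitive $p'$-subgroup of order $12$ whose Sylow $2$-subgroup is $\langle -I,a\rangle\cong\mathbb{Z}_2\times\mathbb{Z}_2$ (any involution of $D(2,q)$ commuting with an involution in the coset $D(2,q)a$ must be scalar, so this is the only non-cyclic shape that occurs). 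For such $K$ your decomposition $K=L\rtimes P$ with $L$ of odd order and $P$ cyclic does not exhaust $K$, the phrase ``any generator of $P$'' has no meaning, and the coprime splitting $L=L^+\times L^-$ breaks at the prime $2$ because $-I$ is simultaneously fixed and inverted by the swap. The paper absorbs $-I$ into $L$ and deals with this in the abelian branch of its Lemma \ref{sbgrps_of_M(2,q)}. Your argument is repairable along the same lines --- write $K=(L_{2'}\times\langle -I\rangle)\rtimes\langle h_0\rangle$ with $h_0\in\{a,\,dia(1,-1)a\}$ of order $2$, note that $\langle -I\rangle$ is the unique central involution of ${\rm GL}(2,q)$ and hence is common to $K$ and $H$ automatically, and then run your $L^{\pm}$ analysis on the odd part --- but as written the elementary abelian Sylow case is simply not covered.
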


\begin{thm}{\label{Conjgcy_clases_of_prmtve_sbgrps}}
    Let $H\leq N(2,q)$ be a cube-free primitive subgroup of order $m$ where $p\nmid m$. Let $N_{pr}(m, H)$ be the number of conjugacy classes of imprimitive subgroups of ${\rm GL(2,q)}$ of order $m$ that are isomorphic to $H$. Then $N_{pr}(m, H) = 1$.
\end{thm}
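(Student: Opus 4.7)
The plan is to prove that any primitive cube-free $p'$-subgroup $K \leq {\rm GL}(2,q)$ of order $m$ abstractly isomorphic to $H$ is conjugate to $H$ in ${\rm GL}(2,q)$. By Lemma \ref{strctre_of_cbe_fre_sbgrps_of_GL}(c), after conjugation we may assume $K \leq N(2,q) = S(2,q) \rtimes \langle b\rangle$, where $b$ is an involution acting on $S(2,q)$ by $s \mapsto s^q$. Writing $K = L_K \rtimes P_K$ and $H = L_H \rtimes P_H$ as supplied by that lemma, with $L_K, L_H \leq S(2,q)$ the cyclic Hall $2'$-subgroups and $P_K, P_H$ Sylow $2$-subgroups, the problem reduces to matching up these two pieces.

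First I would identify $L_K$ with $L_H$. Both are cyclic subgroups of the cyclic group $S(2,q)$ of the same order (the odd part of $m$, determined by $K \cong H$), so they coincide; set $L := L_K = L_H$. Since $S(2,q)$ is abelian it centralises $L$, so any subsequent conjugation by an element of $S(2,q)$ preserves $L$ pointwise.

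The main step is to conjugate $P_K$ onto $P_H$ by an element of $S(2,q)$. The driving identity is, for $u, s \in S(2,q)$,
\[
u(sb)u^{-1} = u\, s\, (b u^{-1} b^{-1})\, b = (u^{1-q} s)\, b,
\]
so conjugation by $u$ shifts the $S(2,q)$-component of $sb$ by $u^{1-q}$. Since $\gcd(1-q,\,q^2-1)=q-1$, the map $u \mapsto u^{1-q}$ surjects onto the unique subgroup $S_0 \leq S(2,q)$ of order $q+1$. Because $|P| \leq 4$ by cube-freeness, I would distinguish four cases. If $P \leq S(2,q)$ then $K \leq S(2,q)$ is cyclic of order $m$, and uniqueness of subgroups in the cyclic group $S(2,q)$ gives $K = H$. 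If $P \cong \mathbb{Z}_2$ lies outside $S(2,q)$, then the generators $s_Kb$ and $s_Hb$ of $P_K$ and $P_H$ satisfy $s_K^{q+1}=s_H^{q+1}=1$, so $s_Hs_K^{-1} \in S_0$ and we can solve $u^{1-q}=s_Hs_K^{-1}$. If $P \cong \mathbb{Z}_4$, the order-$4$ generators must satisfy $s_K^{q+1} = s_H^{q+1}=z$, the unique involution of $S(2,q)$ (from computing $(sb)^2 = s^{q+1}$), so again $(s_Hs_K^{-1})^{q+1}=1$. If $P \cong \mathbb{Z}_2 \times \mathbb{Z}_2$, then $P = \langle z, sb\rangle$ with $s^{q+1}=1$; the central $z$ is fixed by conjugation and the $sb$ part is handled as in the $\mathbb{Z}_2$ case. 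In every case a suitable $u \in S(2,q)$ yields $uP_Ku^{-1}=P_H$, and therefore $uKu^{-1}=H$.

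The most delicate point is the ``cyclic mixed'' scenario, where $H \leq S(2,q)$ is cyclic primitive but $K$ has a generator in the coset $S(2,q)b$, since the $S(2,q)$-conjugation argument above does not directly apply. Here I would argue separately by rational canonical form: a generator of $K$ outside $S(2,q)$ is a $2\times 2$ matrix whose characteristic polynomial (forced to have trace zero since $(sb)^2$ is scalar) coincides with the characteristic polynomial of some generator of $H$, as both polynomials are the unique degree-$2$ irreducible over $\mathbb{F}_q$ whose roots have the appropriate order and norm. This yields a conjugating element in ${\rm GL}(2,q)$ sending $K$ to $H$, completing the proof that $N_{pr}(m,H)=1$.
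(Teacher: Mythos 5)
Your proof is correct and follows essentially the same route as the paper: reduce to $N(2,q)$ via Lemma \ref{strctre_of_cbe_fre_sbgrps_of_GL}(c), identify the odd Hall parts using uniqueness of subgroups of the cyclic group $S(2,q)$, and then conjugate the Sylow $2$-parts inside $N(2,q)$. The only differences are cosmetic: where the paper invokes Lemma \ref{elmts_of_ordr_2_and_4_in_N} (single conjugacy class of order-$2$ and order-$4$ elements of $S(2,q)b$) and Short's Theorem 2.3.3 for the cyclic case, you reprove these inline — via the surjectivity of $u \mapsto u^{1-q}$ onto the subgroup of order $q+1$, and via rational canonical form, respectively — and your ``cyclic mixed'' case is in fact vacuous, since any cyclic group generated by an element of $S(2,q)b$ has a scalar subgroup of index $2$ and is therefore imprimitive.
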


The paper is organised as follows. We prove Theorem \ref{cnjugcy_clss_of_rducble_sbgrps} in Section \ref{Red_sbgrps_of_GL(2,p)}. In Section \ref{imprmtve_sbgrps} we find the conjugacy classes in $M(2, q)$ of elements of orders $2$ and $4$ and then prove Theorem \ref{Conjgcy_clases_of_imprmtve_sbgrps}. In Section \ref{prmtve_sbgrps} we find the number of conjugacy classes in $N(2,q)$ of elements of orders $2$ and $4$ and prove Theorem \ref{Conjgcy_clases_of_prmtve_sbgrps}. Finally, in Section \ref{misc} we provide an explicit description of the cube-free solvable $p'$-subgroups of ${\rm GL}(2,q)$ which can be taken as representatives of the conjugacy classes.

\section{Reducible cube-free $p'$-subgroups of ${\mbox GL}(2,q)$}\label{Red_sbgrps_of_GL(2,p)}
In this section we will provide a closed formula for the number of conjugacy classes of reducible cube-free $p'$-subgroups of ${\rm GL}(2,q)$. Let $K$ be a reducible subgroup of ${\rm GL}(2,q)$ of order $m$ where $p \nmid m$ and $m$ is cube-free. By Lemma \ref{strctre_of_cbe_fre_sbgrps_of_GL}, we know that $K$ will be conjugate to a subgroup of $D(2, q)$. 

\medskip
\noindent
{\bf Proof of Theorem \ref{cnjugcy_clss_of_rducble_sbgrps}} 

\begin{proof}Fix the subgroup $H$ of $D(2,q)$ of order $m$ where $p\nmid m$ and where $m$ is cube-free. Let ${\cal Y} = \{K \leq {\rm GL}(2,q) \mid K \mbox{ is reducible and } K \cong H \}$. Then ${\rm GL}(2,q)$ acts on ${\cal Y}$ by conjugation. Let ${\cal \hat{Y}} = \{ [K] \mid  K \in {\cal Y}\}$. Clearly $N_{red}(m, H) = |{\cal \hat{Y}}|$.

Let ${\cal Y}_M = \{ T \mid T \leq D(2,q) \mbox{ and } T \cong H\}$. Then $M(2, q)$ acts on ${\cal Y}_M$ by conjugation. Let ${\cal \hat{Y}}_M = \{ [T]_M \mid T \leq D(2,q) \mbox{ and } T \cong H\}$ where $[T]_M$ denotes the conjugacy class of $T$ with respect to the action of $M(2, q)$. 

    \vspace{.05in}
    \noindent
    We know that any reducible subgroup of ${\rm GL}(2,q)$ whose order is co-prime to $p$ is conjugate to a subgroup of $D(2,q)$. So for $K \leq {\rm GL}(2,q)$ such that $[K] \in {\cal \hat{Y}}$ there exists a $\hat{K} \leq D(2,q)$ such that ${\hat{K} \in [K]}$.
    Further two distinct subgroups of $D(2,q)$ that are conjugates in ${\rm GL}(2,q)$ are always conjugated in $M(2,q)$, see \cite[Lemma 1.3]{PKGV2023}. Thus the map from ${\cal \hat{Y}}$ to ${\cal \hat{Y}}_M$ given by $[K] \to [\hat{K}]_M$ turns out to be bijective. Hence we can conclude that $N_{red}(m,H) = |{\cal \hat{Y}}| = |{\cal \hat{Y}}_M|$.
    
    \vspace{.05in}
    \noindent
     Any abelian group is a direct product of its Sylow subgroups. Thus $|{\cal Y}_M|= \prod_{i=0}^{k} t_i$, where $t_i$ is the number of subgroups of order $p_i^{\beta_i}$ in $D(2,q)$. Since $H$ is a cube-free group, the Sylow $p_i$-subgroup of $H$ is either cyclic or isomorphic to ${\mathbb Z}_{p_i} \times {\mathbb Z}_{p_i}$. Further by Lemma \ref{strctre_of_cbe_fre_sbgrps_of_GL}, we have that $H \cong {\mathbb Z}_l \times {\mathbb Z}_s$ where $l \mid q-1$ and $s \mid q-1 $. So $p_i \mid q-1$ for all $i$.

    \vspace{.05in}
    \noindent
     If $P_i \cong {\mathbb Z}_{p_i} \times {\mathbb Z}_{p_i}$, then there is only one choice for $P_i$ as a subgroup of $D(2,q)$, see \cite[Lemma 1.2]{PKGV2023}. Therefore $|{\cal Y}_M|= \prod_{i \in {\cal I} \cup \{0\}} t_i$. The product will not involve $t_0$ if either $\beta_0 = 0$ or $P_0 \cong {\mathbb Z}_2 \times {\mathbb Z}_2$.

    \vspace{.05in}
    \noindent
    Now a cyclic subgroup of order $p_i^{\beta_i}$ in $D(2, q)$ is generated by an element of the form $dia(\lambda_1,\lambda_2)$ where $\lambda_i \in {\mathbb F}_q^*$. Further the order of one of the $\lambda_i$ is $p_i^{\beta_i}$ and the order of the other divides $p_i^{\beta_i}$. Therefore
    
    \begin{align*}
t_i & = \frac{{(\varphi(p_i^{\beta_i}))}^2 + 2\sum_{j=1}^{\beta_i} \varphi(p_i^{\beta_i})\varphi(p_i^{\beta_i-j})}{\varphi(p_i^{\beta_i})}\\
    & = \varphi(p_i^{\beta_i}) + 2 \{\varphi(p_i^{\beta_i -1}) + \ldots + \varphi(p_i) + 1\}\\
    & = p_i^{\beta_i} + p_i^{\beta_i - 1}
\end{align*}
where $\varphi$ is the Euler's $\varphi$-function. Hence $|{\cal Y}_M|= \prod_{i \in {\cal I} \cup \{0\}}(p_i^{\beta_i} + p_i^{\beta_i - 1})$ provided $\beta_0 \geq 1$ and $P_0$ is cyclic. If not, the product will only involve $i \in {\cal I}$. By \cite[Theorem 3.22]{JR1995}, the number of orbits required
\begin{equation}{N_{red}(m,H)} \label{eq: no_of_orbts}
 = \frac{1}{2|D(2,q)|} \, \left (\sum_{d\in D(2,q)}  | {\rm Fix}(d)| + \sum_{d\in D(2,q)} |{\rm Fix}(da)|\right). \tag{$*$}
\end{equation}
Clearly each $d \in D(2,q)$ fixes every element of ${\cal Y}_M$. So $|{\rm Fix}(d)|= |{\cal Y}_M|$. Also ${\rm Fix}(da) = {\rm Fix}(a)=\{K \in {\cal Y}_M \mid aK{a}^{-1} = K \}$. Now let $S_i = \{S \leq D(2,q) \mid S \cong P_i \mbox{ and } aS{a}^{-1} = S\}$. Therefore $|{\rm Fix}(a)|= \prod_{i=0}^{k} |S_i|$ where $i$ occurs in the product only if $\beta_i >0$.

\vspace{.05in}
\noindent
As seen earlier if $P_i \cong {\mathbb Z}_{p_i} \times {\mathbb Z}_{p_i}$ for any $i$, then $|S_i|=1$. So $|{\rm Fix}(a)| = \prod_{i \in {\cal I} \cup \{0\}} |{S_i}|$ provided $\beta_0 \geq 1$ and $P_0$ is cyclic. If not, the product will only involve $i \in {\cal I}$.

\vspace{.05in}
\noindent
Now for any $i$ if $P_i$ is cyclic, then by \cite[Lemma 2.2]{PKGV2023}, we get that $|S_i| = 1 +  \mbox{Number of elements of order } 2 \mbox{ in } {\rm Aut}(\mathbb{Z}_{p_i^{\beta_i}})$. Thus $|S_i| = 2$ for $i \in {\cal I}$. Further if $0 \leq \beta_0 \leq 1$ or $P_0 \cong {\mathbb Z}_2 \times {\mathbb Z}_2$ then $|S_0| = 1$ and $|S_0| = 2$ if $P_0 \cong {\mathbb Z}_4$. Putting these values in (\ref{eq: no_of_orbts}) we get the desired value of ${N_{red}(m,H)}$. 
\end{proof}

\section{Imprimitive cube-free $p'$-subgroups of ${\rm GL}(2,q)$ }
\label{imprmtve_sbgrps}

In this section we will determine the number of conjugacy classes of cube-free solvable imprimitive $p'$-subgroups of ${\rm GL}(2,q)$. Let $K$ be a solvable imprimitive subgroup of ${\rm GL}(2,q)$ of cube-free order $m$ where $p \nmid m$. Then by Lemma \ref{strctre_of_cbe_fre_sbgrps_of_GL}, $K$ is a conjugate of a subgroup $H$ of $M(2, q)$. Further, $H = L \rtimes P$ where $L \leq D(2,q)$ and $P$ is a cyclic subgroup of order $2^{\beta}$ of $H$ where $\beta \in  \{1,2\}$. We will use this structure to show that any two isomorphic cube-free solvable imprimitive $p'$-subgroups of ${\rm GL}(2,q)$ are conjugate in ${\rm GL}(2,q)$. 

\vspace{.05in}

\begin{lemm}\label{elmnts_of_ordr_4_in_M}
    Let $g$ and $h$ be any two elements of order $t$ in the coset $D(2,q)a$, where $t \in \{2, 4\}$. Then there exists an element $d \in D(2,q)$ such that $dgd^{-1} = h$. Thus the elements of order $t$ in $D(2,q)a$ form a single conjugacy class in $M(2, q)$. 
\end{lemm}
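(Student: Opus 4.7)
The plan is to work with the explicit matrix form of elements in the coset $D(2,q)a$ and to exhibit a conjugating diagonal matrix by direct construction. First I would note that every element of $D(2,q)a$ has the form $g = \begin{pmatrix} 0 & x \\ y & 0 \end{pmatrix}$ for some $x, y \in \mathbb{F}_q^*$, and that squaring gives $g^2 = (xy)\, I$. Consequently $g$ has order $2$ iff $xy = 1$, and order $4$ iff $xy$ has multiplicative order $2$ in $\mathbb{F}_q^*$, i.e.\ $xy = -1$ (so order-$4$ elements of the coset exist only when $q$ is odd; otherwise the $t = 4$ case is vacuous). The crucial observation is that the scalar $xy$ depends only on $t$, not on the particular order-$t$ element: set $c_2 = 1$ and $c_4 = -1$.

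Next I would compute the conjugation action of $D(2,q)$ on the coset explicitly. For $d = dia(d_1, d_2) \in D(2,q)$, a direct calculation gives
\[ d \begin{pmatrix} 0 & x \\ y & 0 \end{pmatrix} d^{-1} = \begin{pmatrix} 0 & (d_1 d_2^{-1})\, x \\ (d_1^{-1} d_2)\, y & 0 \end{pmatrix}. \]
Given two elements $g, h$ of order $t$ with off-diagonal entries $(x, y)$ and $(x', y')$ respectively, I would simply take $d_1 = x'/x$ and $d_2 = 1$. The upper-right entry of $d g d^{-1}$ then becomes $x'$, while the lower-left entry becomes
\[ (d_1^{-1} d_2)\, y = \frac{xy}{x'} = \frac{c_t}{x'} = \frac{x' y'}{x'} = y', \]
where the middle equalities use $xy = x'y' = c_t$, which holds because $g$ and $h$ share the order $t$. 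Hence $d g d^{-1} = h$, and since $d \in D(2,q) \leq M(2,q)$, the order-$t$ elements of $D(2,q)a$ form a single conjugacy class in $M(2,q)$.

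I do not anticipate a genuine obstacle here; the entire argument rests on the elementary identity $g^2 = (xy) I$, which forces the product $xy$ to be a fixed scalar across all order-$t$ elements of the coset and thereby makes both off-diagonal entries match simultaneously under the single choice $d_1 = x'/x$, $d_2 = 1$.
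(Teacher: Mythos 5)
Your proof is correct and follows essentially the same route as the paper: both arguments write elements of $D(2,q)a$ as antidiagonal matrices, observe that $g^2=(xy)I$ pins down the product of the off-diagonal entries according to the order $t$, and then exhibit an explicit diagonal conjugator (the paper conjugates each order-$t$ element to a fixed representative $a$ or $dia(1,u)a$, while you conjugate $g$ to $h$ directly, which is the same computation). No gaps.
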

\begin{proof}
   Let $g \in M(2,q)$ belong to the coset $D(2,q)a$. If $g$ has order $2$, then $g = dia(\lambda,\lambda^{-1})a$ and if $g$ has order $4$, then $g = dia(\lambda,u\lambda^{-1})a $ where $\lambda \in {\mathbb F}_q^*$ and $u \in {\mathbb F}_q^*$ is the unique element of order $2$. (Note that an element of order $4$ exists in $D(2, q)a$ only if $q$ is odd.)

   \vspace{.05in}
\noindent
   Let $d = dia(\lambda,1)$ and let $g = dia(\lambda, \lambda^{-1})a$ be of order $2$. Then $dad^{-1}=g$. Similarly let $k = dia(1,u)a$ and let $h = dia(\lambda,u\lambda^{-1})a$ be of order $4$.  Then $dkd^{-1} = h$. 
\end{proof}

\begin{lemm}{\label{sbgrps_of_M(2,q)}}
    Let $H_1 = L_1 \rtimes P_1 $ and $H_2 = L_2 \rtimes P_2$ be two imprimitive subgroups of $M(2,q)$, where $L_1$ and $L_2$ are subgroups of $D(2,q)$ and $P_1$ and $P_2$ are cyclic subgroups  order $2^{\beta}$ where $\beta \in  \{1,2\}$. Then $H_1 \cong H_2$ if and only if $L_1 = L_2$ and $P_1 \cong P_2$.
\end{lemm}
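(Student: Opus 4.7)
The plan is to start by extracting a strong common feature of all elements in the coset $D(2,q)a$: their conjugation action on $D(2,q)$ is the single swap automorphism $\tau : dia(x,y)\mapsto dia(y,x)$. A direct matrix computation shows that for any $g = dia(\lambda,\mu)a$,
\[
g\cdot dia(x,y)\cdot g^{-1} = dia(y,x),
\]
the diagonal prefactor cancelling because $D(2,q)$ is abelian. Consequently the conjugation action of $P_i$ on $L_i$ is by $\tau$ through any chosen generator of $P_i \setminus D(2,q)$, and for $\beta = 2$ the central element $g_i^2 = uI$ acts trivially.

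Next I would note that $L_i$ has odd order: this is explicit for $\beta = 1$ from the structure given in Lemma \ref{strctre_of_cbe_fre_sbgrps_of_GL}, and for $\beta = 2$ it follows from $L_i \cap P_i = 1$ together with $P_i \cap D(2,q) = \langle uI \rangle$ and the observation that the only order-$2$ elements of $D(2,q)$ are $dia(u,1)$, $dia(1,u)$ and $uI$, with the first two swapped by $\tau$. Since $L_i$ is $\tau$-stable and of odd order, it decomposes as an internal direct product $L_i = L_i^+ \times L_i^-$, where $L_i^+ = \{l \in L_i : \tau(l)=l\}$ is contained in the scalar subgroup $\{dia(x,x) : x \in \mathbb{F}_q^*\}$ and $L_i^- = \{l \in L_i : \tau(l) = l^{-1}\}$ is contained in the anti-scalar subgroup $\{dia(x,x^{-1}) : x \in \mathbb{F}_q^*\}$. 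Each of these two ambient subgroups is cyclic of order $q-1$, so a subgroup of either is uniquely determined by its order.

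For the forward implication, $P_i$ is a Sylow $2$-subgroup of $H_i$, so $H_1 \cong H_2$ gives $P_1 \cong P_2$ immediately. To deduce $L_1 = L_2$, I would characterise $L_i^+$ and $L_i^-$ intrinsically inside $H_i$: $L_i$ is the unique normal Hall $2'$-subgroup of $H_i$, $L_i^+ = C_{L_i}(P_i)$, and $L_i^-$ is the subgroup of $L_i$ on which $P_i$ acts by inversion. An abstract isomorphism $H_1 \to H_2$ preserves all of these pieces, so $|L_1^+| = |L_2^+|$ and $|L_1^-| = |L_2^-|$; the uniqueness of subgroups of the cyclic scalar and anti-scalar groups then forces $L_1^\pm = L_2^\pm$, and therefore $L_1 = L_2$.

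For the converse, assuming $L_1 = L_2 = L$ and $|P_1| = |P_2|$, pick generators $g_i$ of $P_i$. Both $g_i$ induce $\tau$ on $L$, have the same order, and (for $\beta = 2$) automatically satisfy $g_i^2 = uI$, so the assignment $l g_1^j \mapsto l g_2^j$ is a well-defined group isomorphism $H_1 \to H_2$. The main obstacle is the $\beta = 2$ case of the forward direction: since $H_i \cap D(2,q) = L_i \times \langle uI\rangle$ strictly contains $L_i$, one must carefully peel off the central $2$-part in order to recover $L_i$, and hence $L_i^\pm$, from the abstract structure of $H_i$.
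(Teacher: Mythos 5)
Your forward direction takes a genuinely different route from the paper's: instead of working Sylow-by-Sylow inside $L_1$ and invoking \cite[Lemma 2.2]{PKGV2023} to normalise generators of the cyclic Sylow subgroups, you split $L_i$ into its scalar part $L_i^+$ and anti-scalar part $L_i^-$, observe that each sits inside a cyclic group of order $q-1$ (so is determined by its order), and characterise $L_i^+$ and $L_i^-$ intrinsically. Where it applies, this is cleaner than the paper's argument; your converse coincides with the paper's.

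There is, however, a genuine gap: the claim that $L_i$ has odd order is false for $\beta=1$ and is not implied by the hypotheses, which only require $L_i\le D(2,q)$. Cube-freeness of $|H_i|=2|L_i|$ rules out $4\mid|L_i|$ but not $2\mid|L_i|$: the subgroup $L_i$ may contain the central involution $-I$. For example, over $\mathbb{F}_{13}$ take $L=\langle -I\rangle\times\langle dia(3,9)\rangle$ of order $6$ and $P=\langle a\rangle$; then $L\rtimes P$ is an irreducible imprimitive subgroup of order $12$ satisfying the hypotheses, and it admits no presentation with $|L|$ odd since its Sylow $2$-subgroup is $\mathbb{Z}_2\times\mathbb{Z}_2$. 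In this case three of your steps fail at once: $P_i$ is not a Sylow $2$-subgroup of $H_i$, so $P_1\cong P_2$ is not ``immediate''; $L_i$ is not the Hall $2'$-subgroup of $H_i$, so your intrinsic characterisation of $L_i$ breaks; and the decomposition $L_i=L_i^+\times L_i^-$ needs odd order. This is precisely the sub-case on which the paper's proof spends most of its effort (where $\phi(L_1)=L\times\langle d_1\rangle$ and $L_2=L\times\langle d_2\rangle$ with $d_i$ of order $2$). The gap is repairable: normality of $L_i$ under the swap $\tau$ forces its order-$2$ part to be $\langle -I\rangle$ (the involutions $dia(-1,1)$ and $dia(1,-1)$ are interchanged by $\tau$, so neither can lie in $L_i$ without the other, which would give $4\mid|L_i|$), whence $L_i$ is the product of the Hall $2'$-subgroup of $H_i$ with the central subgroup $\langle -I\rangle$ and is still characteristic; your argument then runs on the odd part. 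But as written the proof omits a case that actually occurs. Finally, your closing remark misplaces the difficulty: for $\beta=2$ the order of $L_i$ genuinely is odd and $L_i$ is recovered at once as the unique normal Hall $2'$-subgroup, so there is nothing to peel off there; the delicate case is $\beta=1$ with $-I\in L_i$.
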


\begin{proof}
    Let $\phi: H_1 \longmapsto H_2$ be an isomorphism. We first claim that $\phi(L_1) = L_2$.

    If the $H_i$ are non-abelian then the $L_i$ are either the Hall $2'$-subgroups respectively or they are the respective Fitting subgroups and so $\phi(L_1) = L_2$. If the $H_i$ are abelian, then either the $L_i$ are Hall $2'$-subgroups respectively and so $\phi(L_1) = L_2$ or we can write $\phi(L_1) = L \times \langle d_1 \rangle$ and $L_2 = L \times \langle d_2 \rangle$ where $L$ is the Hall $2'$-subgroup of $H_2$ and  $d_1$ and $d_2$ are elements of order $2$ in $D(2,q)$. Now if $P_2 = \langle d'a \rangle$, for some $d' \in D(2, q)$, then we have that $d_i$ commute with $d'a$. Thus we get $ad_ia^{-1}=d_i$ and so $d_i$ is a scalar matrix of order $2$ for each $i$. Hence $d_1 =d_2$ and we get $\phi(L_1) = L_2$ as required. This also implies that $|P_1| = |P_2|$ and so $P_1 \cong P_2$.

    Now let $s$ be a prime divisor of $|L_1|$ and let $S$ be the Sylow $s$-subgroup of $L_1$ with $|S| = s^k$ for some $k \in \{1, 2\}$. Our aim is to show that $S = \phi(S)$ for each prime $s$ dividing $|L_1|$ giving us $L_1 = \phi(L_1) = L_2$. 

If $S \cong {\mathbb Z_s} \times {\mathbb Z_s}$ by \cite[Lemma 1.2]{PKGV2023} we have that $S$ is the unique subgroup of $D(2, q)$ that is isomorphic to ${\mathbb Z_s}\times {\mathbb Z_s}$. Thus we must have $\phi(S) = S$.

Now let $S$ be cyclic. Let $P_1 = \langle da \rangle$ for some $d \in D(2,q)$. Since $L_1 \trianglelefteq H_1$, we have $aL_1{a}^{-1} = L_1 $ and so $aS{a}^{-1} = S$. Since $aS{a}^{-1} = S$, by \cite[Lemma 2.2]{PKGV2023} we get that $S = \langle dia(\lambda_1, {\lambda_1}^{l_1}) \rangle$ where $\lambda_1 \in {\mathbb{F}_q}^{*}$ with $o(\lambda_1)= s^k$ and $l_1 \in \rm{Aut}(\mathbb{Z}_{s^k})$ with ${l_1}^2= 1$.  Similarly we must have $\phi(S) = \langle dia(\lambda_2, {\lambda_2}^{l_2}) \rangle$ where $\lambda_2 \in {\mathbb{F}_q}^{*}$ with $o(\lambda_2)= s^k$ and $l_2 \in \rm{Aut}(\mathbb{Z}_{s^k})$ with ${l_2}^2= 1$. 

If $S \not = \phi(S)$ then by \cite[Lemma 2.2]{PKGV2023}, we must have $l_1 \not = l_2$. Since ${l_i}^2 =1$ in $\rm{Aut}(\mathbb{Z}_{s^k})$, we can assume that $l_1 =1$ and that $l_2 = -1$. But then $S$ is generated by a scalar matrix and is central. Using this we can show that a generator for $\phi(S)$ is a scalar matrix which is a contradiction. Hence we must have $S = \phi(S)$ when $S$ is cyclic.   

If $L_1 = L_2$ and $\psi : P_1 \longmapsto P_2$ is an isomorphism, then we can define a map $f : H_1 \longmapsto H_2$ as $f(bz) = b\psi(z)$ where $b \in L_1$ and $z \in P_1$. One can easily check that $f$ is an isomorphism as $yxy^{-1} = \psi(y) x {\psi(y)}^{-1}$ for all $x \in L$ and $y \in P_1$.
\end{proof}

\noindent
{\bf Proof of Theorem \ref{Conjgcy_clases_of_imprmtve_sbgrps}}

\begin{proof}
Let $H\leq M(2,q)$ be a cube-free imprimitive subgroup of order $m$ where $p\nmid m$. By Lemma \ref{strctre_of_cbe_fre_sbgrps_of_GL}, we can assume that $H = L \rtimes P$ where $L \leq D(2,q)$ and $P$ is cyclic of order $2^k$ where $k \in \{1, 2\}$.

Let $H_1$ be an imprimitive subgroup of $M(2,q)$ isomorphic to $H$. Then by Lemma \ref{sbgrps_of_M(2,q)}, we get $H_1 = L \rtimes P_1$ where $P_1 \cong P$. Further by Lemma \ref{elmnts_of_ordr_4_in_M}, we must have $P_1 = dPd^{-1}$ for some $d \in D(2,q)$ and hence we have $dHd^{-1} = H_1$. So every imprimitive subgroup of $M(2,q)$ which is isomorphic to $H$ is conjugate to $H$. 

Now suppose $K$ is an imprimitive subgroup of ${\rm GL}(2,q)$ isomorphic to $H$. Then by Lemma \ref{strctre_of_cbe_fre_sbgrps_of_GL} there exist a subgroup $H_1$ of $M(2,q)$ such that $K$ is conjugate to $H_1$ in ${\rm GL}(2,q)$. Clearly by the above discussion $H_1$ is a conjugate of $H$. Thus every imprimitive subgroup of ${\rm GL}(2,q)$ isomorphic to $H$ is also a conjugate of $H$. 
\end{proof}

\section{Primitive cube-free $p'$-subgroups of ${\rm GL}(2,q)$ that are solvable}\label{prmtve_sbgrps}

Let $H\leq N(2,q)$ be a cube-free primitive subgroup of order $m$ where $p\nmid m$. In this section we use the structure of cube-free solvable primitive subgroups of ${\rm GL}(2,q)$ to show that $N_{pr}(m, H) = 1$. Recall that $N(2,q) = S(2, q) \rtimes \langle b \rangle$ where $b$ has order $2$. Further using the discussion after Theorem 2.3.5 in \cite{S1992}, we have that $b h b^{-1} = h^q$ where $S(2,q) = \langle h \rangle$. Also note that unless $p$ is an odd prime $N(2, q)$ cannot have elements of order $4$.

\begin{lemm}{\label{elmts_of_ordr_2_and_4_in_N}} Any two elements of order $s$ in the coset $S(2,q)b$, where $s \in \{2,4\}$, are conjugate in $N(2,q)$. Thus the elements of order $s$ in $S(2,q)b$ form a single conjugacy class.
\end{lemm}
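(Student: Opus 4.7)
The plan is to parametrize the elements of the coset $S(2,q)b$ using a generator $h$ of the Singer cycle $S(2,q)$ and then exploit the conjugation action of $S(2,q)$ on this coset. I would write a typical element as $h^i b$ with $i$ taken modulo $q^2-1$. Using the defining relation $bhb^{-1} = h^q$, a short direct computation gives $(h^i b)^2 = h^{i(q+1)}$. From this one reads off the arithmetic conditions on $i$: the element $h^i b$ has order $2$ precisely when $(q-1) \mid i$, and it has order $4$ precisely when $i$ is an odd multiple of $(q-1)/2$ (which in particular forces $q$ to be odd). A quick count shows that in each case there are exactly $q+1$ elements of the given order in $S(2,q)b$.

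The heart of the argument is the conjugation identity
\begin{equation*}
h^j (h^i b) h^{-j} = h^{i + j(1-q)}\, b,
\end{equation*}
which shows that conjugating by $h^j$ shifts the exponent $i$ by $j(1-q) \pmod{q^2-1}$. Since the subgroup of $\mathbb{Z}/(q^2-1)$ generated by $1-q$ equals $(q-1)\mathbb{Z}/(q^2-1)$, the set of achievable shifts is all multiples of $q-1$. In the order-$2$ case, writing $i = (q-1)k$ with $k$ modulo $q+1$, conjugation by $h^j$ sends $k \mapsto k - j$, so all $q+1$ order-$2$ elements lie in a single $S(2,q)$-orbit inside $N(2,q)$. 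In the order-$4$ case, writing $i = (q-1)k/2$ with $k$ odd modulo $2(q+1)$, conjugation by $h^j$ sends $k \mapsto k - 2j$; since parity is preserved and every even shift is available, this acts transitively on the $q+1$ odd residues modulo $2(q+1)$.

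The only delicate point I expect to encounter is the order-$4$ bookkeeping: one must work modulo $2(q+1)$ rather than modulo $q+1$, and verify that even shifts suffice to permute the odd residues transitively. Conjugation by $b$ itself is not required; the cyclic subgroup $S(2,q)$ already realises both conjugacies. Once this framework is in place, the conclusion that the elements of order $s \in \{2,4\}$ in $S(2,q)b$ form a single conjugacy class of $N(2,q)$ follows immediately from the transitivity observations above.
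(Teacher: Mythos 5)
Your proof is correct, and the identification of the relevant elements agrees with the paper's: order $2$ occurs exactly at exponents divisible by $q-1$ (giving $q+1$ such elements) and order $4$ exactly at odd multiples of $(q-1)/2$ (again $q+1$ elements, with $q$ necessarily odd). Where you differ is in how transitivity is established. The paper disposes of the order-$2$ case with the same observation you make, but for order $4$ it picks the representative $g = h^{(q-1)/2}b$, computes the centraliser $C_{N(2,q)}(g) = \langle g\rangle\langle h^{q+1}\rangle$, deduces $\lvert [g]\rvert = q+1$ by the orbit--stabiliser theorem, and concludes because the class lies in the coset and matches the count of order-$4$ elements there. You instead prove transitivity directly from the shift formula $h^{j}(h^{i}b)h^{-j} = h^{i+j(1-q)}b$, noting that the achievable shifts are exactly the multiples of $q-1$ and checking that these act transitively on the admissible exponents in each case. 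Your route is more uniform (both orders are handled by the same computation), avoids the centraliser calculation, and makes explicit that conjugation by elements of $S(2,q)$ alone suffices; the paper's counting argument is shorter once the centraliser is known but leaves more verification implicit. The one delicate point you flag --- working modulo $2(q+1)$ and checking that even shifts act transitively on odd residues --- is handled correctly, since the difference of two odd residues is even and every even residue is an achievable shift.
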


\begin{proof}
    The action of $b$ on $h$ ensures that only elements of the form $h^{i(q-1)}b$ where $0 \leq i < q+1$ have order $2$ in $S(2,q)b$ and that every such element is conjugate to $b$.  
    
    Let $p$ be an odd prime. Then we can show that an element of order $4$ in $S(2,q)b$ will have the form $h^{l(q-1)/2}b$ where $l$ is odd and $1 \leq l <2(q+1)$. Let $g = h^{(q-1)/2}b$. Then we can see that $C_{N(2,q)}(g) = \langle g \rangle \langle h^{q+1} \rangle$ and hence $|[g]| = q+1$. Since $l$ is odd, we get $[g]$ is precisely the set of elements of order $4$ in $S(2,q)b$. 
\end{proof}

\medskip
\noindent
{\bf Proof of Theorem \ref{Conjgcy_clases_of_prmtve_sbgrps}}

\begin{proof}
Let $H\leq N(2,q)$ be a cube-free primitive subgroup of order $m$ where $p\nmid m$. By Lemma \ref{strctre_of_cbe_fre_sbgrps_of_GL}, we can assume that if $H \leq S(2,q)$ then $H$ is cyclic. Otherwise we can write $H = L \rtimes P$ where $L \leq S(2,q)$ and $P$ is a Sylow $2$-subgroup of $H$.

Now let $H_1$ be a cube-free primitive subgroup of $N(2,q)$ which is isomorphic to $H$. If $H$ is a subgroup of $S(2,q)$ then it is a cyclic irreducible subgroup of order $m$. By \cite[Theorem 2.3.3]{S1992}, all cyclic irreducible subgroups of order $m$ form a single conjugacy class in ${\rm GL}(2, q)$ and so we have that $H$ and $H_1$ are conjugate. 

Let us assume now that $H$ is not cyclic, and that $H = L \rtimes P$ as above. Since $H_1 \cong H$, we must have that $H_1 = L_1 \rtimes P_1$ where $P_1$ is a Sylow $2$-subgroup of $H_1$ and $L_1 \leq S(2,q)$. 

If $P$ is elementary abelian of order $4$, then $|P \cap S(2,q)| =2$. So we can write $H = (L \times P \cap S(2,q)) \rtimes \langle u \rangle$ where $u \in S(2,q)b$ is of order $2$. Similarly $H_1 = (L_1 \times P_1 \cap S(2,q)) \rtimes \langle v \rangle$ where $v \in S(2,q)b$ is of order $2$. Thus by Lemma \ref{elmts_of_ordr_2_and_4_in_N}, we get that there exists $g \in N(2,q)$ such that $gPg^{-1} = P_1$ whether $P$ is cyclic or elementary abelian.  Since $S(2,q)$ is cyclic we have $L = L_1$ and so $gH_1g^{-1} = H_2$. Thus every primitive subgroup of $N(2,q)$ which is isomorphic to $H$ is conjugate to $H$. 
 
 Now suppose $K$ is a primitive subgroup of ${\rm GL}(2,q)$ isomorphic to $H$. Then by Lemma \ref{strctre_of_cbe_fre_sbgrps_of_GL} there exist a subgroup $H_1$ of $N(2,q)$ such that $K$ is conjugate to $H_1$ in ${\rm GL}(2,q)$. Clearly by the above discussion $H_1$ is a conjugate of $H$. Thus every primitive subgroup of ${\rm GL}(2,q)$ isomorphic to $H$ is also a conjugate of $H$. 
\end{proof}

\section{Miscellaneous}
\label{misc}
In this section we provide an explicit description of the cube-free solvable $p'$-subgroups of ${\rm GL}(2,q)$ which can be taken as representatives of the conjugacy classes. By Lemma \ref{strctre_of_cbe_fre_sbgrps_of_GL}, we can consider these as members of $D(2,q)$, $M(2,q)$ and $N(2, q)$ respectively when they are reducible, imprimitive and primitive respectively.

We first consider $H$ as given in Theorem \ref{cnjugcy_clss_of_rducble_sbgrps}. The notations established there will be used as will some aspects of the proof. Let $H = \prod_{i=0}^{k}P_i$ where $P_i$ is the Sylow $p_i$-subgroup of $H$ and the product is direct. Let $M = M(2,q)$. Then $N_M(H)$ is either $D(2,q)$ or $M(2,q)$. 

Let $N_M(H)= M(2,q)$. Since $aHa^{-1} = H$, we get that $aP_ia^{-1} = P_i$ for all $i$. Now for any $ i \in {\cal I}$ we have that $P_i$ is a cyclic subgroup of $D(2,q)$, satisfying $aP_ia^{-1} = P_i$. Thus by \cite[Lemma 2.2]{PKGV2023}, we get that $P_i = \langle dia(\lambda_i, \lambda_i^{k_i}) \rangle$ with ${k_i}^2=1\mod{{p_i}^{\beta_i}}$ where $|\lambda_i| = {p_i}^{\beta_i}$ and $1 \leq k_i \leq {p_i}^{\beta_i} -1$. So $k_i =1$ or $k_i = {p_i}^{\beta_i} -1$. If $P_0$ is cyclic it will have a similar form.  

Let ${\cal I} = {\cal I}_1 \cup {\cal I}_2$ where ${\cal I}_1= \{ i \in {\cal I}  \mid k_i =1\}$ and ${\cal I}_2 = {\cal I}\setminus {\cal I}_1$. If $P_0$ is not cyclic, then for all $i \not \in I$, we must have that $P_i$ is the unique subgroup of $D(2,q)$ isomorphic to ${\mathbb Z}_{p_i} \times {\mathbb Z}_{p_i}$. For such $i >0$, we can take $P_i = \langle dia(\lambda_i, \lambda_i) \rangle \times \langle dia(\lambda_i, {\lambda_i}^{p_i-1}) \rangle$ where $|\lambda_i| = p_i$. Let $$H_1 = \prod_{i=1}^{k} P_i = \left(\prod_{\{i \in {\cal I}_1\}} P_i \right) \times \left(\prod_{\{i \in {\cal I}_2 \}} P_i\right) \times \left(\prod_{\{i \in {\cal I}_3\}}P_i\right)$$ where ${\cal I}_3$ consists of $i \not \in {\cal I}$ and $i \neq 0$. For $t \in \{1, 2, 3\}$, define $\lambda_{{\cal I}_t} = \prod_{\{i \in {\cal I}_t\}}\lambda_i$. Note that $\lambda_{{\cal I}_t}$ has order $\prod_{\{i \in {\cal I}_t\}}{p_i}^{\beta_i}$ for $t = 1,2$ and $\lambda_{{\cal I}_3}$ has order $\prod_{\{i \in {\cal I}_3\}}{p_i}$. Clearly $\prod_{\{i \in {\cal I}_1\}} P_i = \langle dia(\lambda_{{\cal I}_1}, \lambda_{{\cal I}_1}) \rangle$. For $i \in {\cal I}_2$, we know that $k_i \not = 1$. Therefore we get that $k_i = {p_i}^{\beta_i} -1$. Since $\lambda_{{\cal I}_2} \in {\mathbb F}_q^*$, it can be shown easily that $\prod_{\{i \in {\cal I}_2\}}{\lambda_i}^{k_i} = {\lambda_{{\cal I}_2}}^{-1}$. Thus $\prod_{\{i \in {\cal I}_2\}} P_i = \langle dia(\lambda_{{\cal I}_2}, {\lambda_{{\cal I}_2}^{-1}}) \rangle$. 

Similarly we can show that $\prod_{\{i \in {\cal I}_3\}} P_i = \langle dia(\lambda_{{\cal I}_3}, \lambda_{{\cal I}_3}) \rangle \times \langle dia(\lambda_{{\cal I}_3}, {\lambda_{{\cal I}_3}}^{-1}) \rangle $. Let $\lambda_{ij} =\lambda_{{\cal I}_i}\lambda_{{\cal I}_j}$ where $i \not= j$ and $i, j \in \{1,2,3\}$. Using the fact that the orders of the $\lambda_{{\cal I}_t}$ are pairwise coprime, we get
$$H_1 = \langle dia(\lambda_{13}, \lambda_{13}) \rangle \times \langle dia(\lambda_{23}, {\lambda_{23}}^{-1}) \rangle.$$ Note that $|\lambda_{t3}| = (\prod_{\{i \in {\cal I}_t\}}{p_i}^{\beta_i})(\prod_{\{i \in {\cal I}_3\}}{p_i})$ for $t \in {1,2}$. Now $H = P_0 \times H_1$ where $H_1$ is as above. If $P_0$ is cyclic then $P_0 = \langle dia(\lambda_0, \lambda_0^{k_0}) \rangle$ with ${k_0}^2=1\mod{2^{\beta_0}}$ where $|\lambda_0| = 2^{\beta_0}$ and $1 \leq k_0 \leq {2}^{\beta_0} -1$. If $P_0$ is not cyclic then $P_0 = \langle dia(-1,-1) \rangle \times \langle dia(-1,1) \rangle$.

\medskip
If $N_M(H) = D(2,q)$, then again $P_i$, the Sylow $p_i$-subgroup of $H$ is either cyclic or a unique subgroup of $D(2,q)$ isomorphic to ${\mathbb Z}_{p_i} \times {\mathbb Z}_{p_i}$. For such $i>1$, we get $P_i = \langle dia(\lambda_i, \lambda_i) \rangle \times \langle dia(\lambda_i, {\lambda_i}^{p_i-1}  ) \rangle$ where $|\lambda_i| = p_i$. Let ${\cal I}_1 = \{i >1 \mid P_i\mbox{ is cyclic and central}\}$. Let ${\cal I}_2 = \{i>1 \mid P_i\mbox{ is cyclic and non-central}\}$ and ${\cal I}_3 = \{i>1 \mid P_i \cong {\mathbb Z}_{p_i} \times {\mathbb Z}_{p_i}\}$. 

\smallskip
For each $i \in {\cal I}_2$ we can show that $P_i = \langle dia(\lambda_i, {\lambda_i}^{k_i}) \rangle$ where $|\lambda_i| = {p_i}^{\beta_i}$, the integer $k_i \in [2, {p_i}^{\beta_i}]$. Let $\lambda' = \prod_{\{i \in {\cal I}_2\}} \lambda_i$. Then it can be shown easily that
$$H = P_0 \times \langle dia(\lambda \lambda'', \lambda \lambda'') \rangle \times \langle dia(\lambda', \prod_{\{i \in {\cal I}_2\}}{\lambda_i}^{k_i}) \rangle \times \langle dia(\lambda'', {\lambda''}^{-1}) \rangle$$ where $\lambda, \lambda''$ are elements of ${\mathbb F}_q^*$ such that $|\lambda| = \prod_{\{i \in {\cal I}_1\}}{p_i}^{\beta_i}$ and $|\lambda''|=\prod_{\{i \in {\cal I}_3\}}p_i$. Note that $P_0$ is either cyclic and central, or cyclic and non-central or elementary abelian of order $4$ and will have an appropriate form as discussed above and in the earlier case.

\medskip
\noindent
Let $H$ be an imprimitive subgroup of $M(2,q)$ of cube-free order $m$ where $p \nmid m$. Let $m= p_0^{\beta_0}p_1^{\beta_1}\ldots p_k^{\beta_k}$  be the prime decomposition of $m$ where $p_0 =2$ and $0 \leq  \beta_i \leq 2 $. Then by Lemma \ref{strctre_of_cbe_fre_sbgrps_of_GL}, we can write $H = L \rtimes P$ where $L \leq D(2,q)$ and $P$ is a cyclic subgroup of order $2^{\beta}$ where $\beta \in \{1,2\}$. Using a proof similar to that of Lemma \ref{sbgrps_of_M(2,q)} we can show that $aLa^{-1} = L$. Thus $L$ is a reducible subgroup of  $D(2, q)$ of cube-free order  with $N_M(L) = M$. Let $P_i$ denote the Sylow $p_i$-subgroups of $L$ for $0\leq i \leq k$. Let ${\cal I}_1 = \{i \geq 1 \mid P_i\mbox{ is cyclic and central}\}$. Let ${\cal I}_2 = \{i \geq 1 \mid P_i\mbox{ is cyclic and non-central}\}$ and ${\cal I}_3 = \{i\geq 1\mid P_i \cong {\mathbb Z}_{p_i} \times {\mathbb Z}_{p_i}\}$. Note that if $|L|$ is even then $P_0$ has to be cyclic of order $2$ and central. By the earlier part, we get that
$$L = P_0 \times \langle dia(\lambda_{13}, \lambda_{13}) \rangle \times \langle dia(\lambda_{23}, {\lambda_{23}}^{-1}) \rangle$$ where $|\lambda_{t3}| = (\prod_{\{i \in {\cal I}_t\}}{p_i}^{\beta_i})(\prod_{\{i \in {\cal I}_3\}}{p_i})$ for $t \in \{1,2\}$. Also note that for these choices of generators for $L$ we do have $aLa^{-1} = L$ since  
$$a{\langle dia({\lambda_{23}}^{-1},\lambda_{23}) \rangle)}a^{-1} 
        =  \langle {dia({\lambda_{23}},\lambda_{23}^{-1})}^{-1} \rangle. $$
Now $H = LP$ and we know that $P$ is cyclic of order $2^{\beta}$ where $\beta \in \{2, 4\}$. Lemma \ref{elmnts_of_ordr_4_in_M} tells us that either $P$ is of order $2$ generated by $ dia(\mu, {\mu}^{-1})a$ or $P$ is of order $4$ generated by $dia(\mu, u{\mu}^{-1})a$ where $u \in {\mathbb F}_q^*$ is the unique element of order $2$. Thus $H$ is determined as a subgroup of $M(2,q)$.

 \vspace{2mm}
 \noindent
 Let $H$ be a cube-free primitive $p'$-subgroup of $N(2,q)$. Then by Lemma \ref{strctre_of_cbe_fre_sbgrps_of_GL}, either $H \leq S(2,q)$ or $H = L \rtimes P$ where $L \leq S(2,q)$ and $P$ is a Sylow $2$-subgroup of $H$ not contained in $S(2,q)$. Note that even when $H \leq S(2,q)$ we can write $H = L \rtimes P = L \times P$ where $P$ is the Sylow $2$-subgroup of $H$. 
 
 If $|L| \mid q-1$ then $L \leq \langle h^{q+1} \rangle$ where $S(2,q) = \langle h \rangle$. Now $\langle h^{q+1} \rangle$ is reducible and conjugates to a subgroup $\hat{K}$ of $D(2,q)$. It is not difficult to show that $\hat{K}$ has no non-scalar matrix. Thus $\langle h^{q+1} \rangle$ is central and so is $L$. 
 
 We can show easily that if $|L| \mid q-1$ then $H$ is not primitive by examining the possibilities for $P$. For, if $P \cong {\mathbb Z}_2 \times {\mathbb Z}_2$ then $H$ is reducible. If $P$ is cyclic and $|P| \mid q-1$ then also $H$ turns out to be reducible. Finally, if $P$ is cyclic and $|P| \nmid q-1 $ then $H$ is irreducible but imprimitive. Thus if $H = L \rtimes P$ is imprimitive then $|L| \mid q^2-1$ but $|L| \nmid q-1$. 

 Conversely, let $H = L \rtimes P$ be a cube-free $p'$-subgroup of order $m$, where $L \leq S(2,q)$ and $P$ is a Sylow $2$-subgroup of $H$. If $|L| \nmid q-1$ then it is not difficult to show that $H$ is primitive.

 \vspace{2mm}
 \noindent
 Now let $m$ be a positive integer such that $m \mid q^2-1$ but $m \nmid q-1$ and let $k = (q^2-1)/m$. Let $S(2,q) = \langle h \rangle$. If $H \leq S(2,q)$ and $|H| = m$, then $H = \langle h^{k} \rangle$. If $H$ is not contained in $S(2,q)$, then  from Lemma \ref{strctre_of_cbe_fre_sbgrps_of_GL} we can take $H = \langle h^{k}\rangle P$ where $P$ is the Sylow $2$-subgroup of $H$. Also, using Lemma \ref{elmts_of_ordr_2_and_4_in_N} we can write down the possible generators of $P$.

\section{Acknowledgements}

\noindent Prashun Kumar would like to acknowledge the UGC-JRF grant ({\emph{identification number}}: 201610088501) which is enabling his doctoral work.

\end{document}